  \theoremstyle{plain}
    \newtheorem{thm}{Theorem}[section]
    \newtheorem{prop}[thm]{Proposition}
   \newtheorem{lemma}[thm]{Lemma}
    \newtheorem{subsec}[thm]{}
\theoremstyle{definition}
    \newtheorem{exam}[thm]{Example}
\theoremstyle{remark}
     \newtheorem{remark}[thm]{Remark}
\renewcommand{\Im}{\operatorname{Im}}
\newcommand{\Hom}{\operatorname{Hom}}
\newcommand{\Z}{\mathbb{Z}}
\newcommand{\R}{\mathbb{R}}
\title{}
\author{}
\date{}
\begin{document}
\title{Some Computations in Equivariant cobordism in relation to Milnor manifolds}
\author{Samik Basu}
\email{samik.basu2@gmail.com; samik@rkmvu.ac.in}
\address{Department of Mathematics,
 Vivekananda University,
 Belur, Howrah 711202,
West Bengal, India.}

\author{Goutam Mukherjee}
\email{goutam@isical.ac.in}
\address{Stat-Math Unit,
 Indian Statistical Institute, Kolkata 700108,
West Bengal, India.}

\author{Swagata Sarkar}
\email{swagatasar@gmail.com}
\address{Stat-Math Unit,
 Indian Statistical Institute, Kolkata 700108,
West Bengal, India.}

\date{\today}
\subjclass[2010]{Primary: 55N22, 55N91;\ Secondary: 55P91, 55Q91,55M35}

\thispagestyle{empty}
\maketitle

\begin{abstract}
Let $\mathcal{N}_*$ be the unoriented cobordism algebra, let $G=(\Z_2)^n$ and let $Z_*(G)$ denote the equivariant cobordism algebra of $G$-manifolds with finite stationary point sets. Let $\epsilon_* :Z_*(G) \to \mathcal{N}_*$ be the homomorphism which forgets the $G$-action. We use Milnor manifolds (degree $1$ hypersurfaces in $\R P^m\times \R P^n$) to construct non-trivial elements in $Z_*(G)$. We prove that these elements give rise to indecomposable elements in $Z_*(G)$ in degrees up to $2^n - 5$. Moreover, in most cases these elements can be arranged to be in $\mathit{Ker}(\epsilon_*)$. 
\end{abstract}

\section{Introduction}
The unoriented cobordism algebra $\mathcal{N}_*$ of smooth closed manifolds is well understood. For $G= (\mathbb Z_2)^n$, let $Z_*(G)$ denote the equivariant cobordism algebra of smooth closed manifolds with smooth $G$-actions with finite stationary point sets. For $G= (\mathbb Z_2)^n,~ n\leq 2$, the algebras are completely determined (\cite{Diff}). For instance, when $n=2$, it is well-known that $Z_*(G)$ is isomorphic to the polynomial algebra with one generator in degree $2$. In fact, the equivariant cobordism algebra is generated by the cobordism  class $[\mathbb R P^2, \phi],$ where the action $\phi$ of $\mathbb Z_2 \times \mathbb Z_2$ on  $\mathbb R P^2$ is given by generators $T_1, ~T_2$ of $\mathbb Z_2 \times \mathbb Z_2$ as follows:
$$ T_1([x,y, z]) = [-x, y,z],~~   T_2([x,y, z]) = [x, -y,z].$$

 For $n > 2$, the structure of the equivariant cobordism algebra is not known. In order to understand the structure of $Z_*(G)$, for $n>2,$ it is important to have complete knowledge about its indecomposable elements. In \cite{flag}, the authors considered this question and proved a sufficient criterion to determine indecomposable elements. It was shown that flag manifolds with suitable actions provide a large supply of indecomposable elements in $Z_d((\Z_2)^n)$ for $d\le n$. In the present paper, we consider Milnor manifolds $\mathcal{H}(m,n)$ with suitable actions of  $G= (\mathbb Z_2)^n,$ with finite stationary points and provide indecomposable elements of $Z_*((\Z_2)^n)$ in dimensions up to $2^{n} - 5$. Finally, we prove that $1 \leq d \leq 2^{k-i+1} -5$, then there are at least $i$ linearly independent, indecomposable elements in $Z_{d}((\mathbb{Z}_{2})^{k})$ (Theorem \ref{ind2}, Theorem \ref{linind}). In fact the choices can be so arranged that the corresponding Milnor manifolds $\mathcal{H}(m,n)$ bound non-equivariantly (Remark \ref{unor}).

 The paper is organized as follows. In Section $2$ we fix notations and recall necessary background. In Section $3$ we consider Milnor manifolds  $\mathcal{H}(m,n)$, $m \leq n$ with suitable actions of $(\Z_2)^n$ with finite stationary point  sets and show that  the equivariant cobordism class $[(\mathbb{Z}_{2})^n , \mathcal{H}(m,n)]$ is non trivial if $n \geq 3$, $m < n$. In Section $4$ we prove the indecomposability of $[(\Z_2)^k,\mathcal{H}(m,n)]$ in certain cases to conclude the results mentioned above.

\section{Representation and Cobordism}\label{rep}

Let $G$ be a finite group and $M^{d}$ a smooth, closed manifold of dimension $d$. We will denote by $(G, M^{d})$ a smooth action of $G$ on $M$ with finite stationary point set.  
The action map will be denoted by $\phi  \colon G \times M \rightarrow M$. Given such an action, $(G, M^{d})$, we say that $M$ bords equivariantly if there is a smooth action of $G$ on a compact $(d+1)$-dimensional manifold $W$, such that the induced $G$-action on the boundary of $W$ is equivariantly diffeomorphic to $(G, M^{d})$. Two actions $(G, {M_{1}}^{d})$ and $(G, M_{2}^{d})$ are said to be equivariantly cobordant if there disjoint union $(G, M_{1}^{d} \sqcup M_{2}^{d})$ bords equivariantly. We will denote the equivalence class of $(G, M^{d})$, under the relation of equivariant cobordism, by $[G, M^{d}]$ and the set of equivalence classes of $d$-dimensional equivariantly cobordant, smooth, closed manifolds by $Z_{d}(G)$. Note that $Z_{d}(G)$ becomes an abelian group under disjoint action. The operation of cartesian product and diagonal action makes $\sum_{d \geq 0} Z_{d}(G)$ into a graded commutative algebra, called the equivariant cobordism algebra $Z_{*}(G)$. Throughout this document we will assume that the group $G$ is a $2$-group, i.e., $G$ = $(\mathbb{Z}_{2})^{n}$, for some $n \geq 0$.

Recall from \cite{Diff} the results relating equivariant cobordism to tangential representations. Let $R_n(G)$ denote the $\Z_2$-vector space whose basis is the set of $n$-dimensional real $G$-representations up to isomorphism.  The operation of direct sum makes $R_{*}(G)$ into a graded algebra. The ring $R_*(G)$ is a polynomial ring over $\Z_2$ on the set of irreducible real representations of $G$.

 For $G= (\Z_2)^n$ we have $R_*(G)\cong \Z_2[\widehat{G}]$ where $\widehat{G}$ = $\Hom (G, \mathbb{Z}_{2})$. For $n \in \mathbb{N}$ denote by $\underline{n}$ the subset of $\mathbb{N}$ given by $\{ 1,2, \cdots , n \}$. Let $ T_1 , ... , T_n $ denote the generators of $G$. For any subset $S \in \underline{n}$, let $\chi_{S} \in \widehat{G}$ denote the character defined by 
$\chi_{S}(T_i) = 1 $ if $i \in S$ and $\chi_{S}(T_i) = 0 $ if $i \notin S$. Let $Y_{S}$ denote the irreducible representation class of $\chi_{S}$. Then we have, 
$$ R_{*} (G) \cong \mathbb{Z}_{2} [Y_{S} | S \subset \underline{n}].$$

Consider an action $(G, M^{d})$ with finite stationary point set $\{ x_1 , \cdots , x_k \}$. For each $i$, let $X(x_{i})$ denote the tangential representation at $x_{i}$. 
Then we have an algebra homomorphism $$\eta_{*} \colon Z_{*}(G) \longrightarrow R_{*}(G)$$   $$ [G, M_{d}] \mapsto \sum_{i = 1}^{k} X(x_{i}) \in R_{d}(G).$$
Stong (\cite{tang}) proved that $\eta_{*}$ is a monomorphism. Note that finite stationary point set condition automatically implies that the image of $\eta$ lies in $\widetilde{R}_{*} (G)$, the subalgebra of $R_{*} (G)$ generated by  $ \{  Y_{S} | S \subset \underline{n}, S \neq \Phi \}$, where $\Phi$ denotes the empty set.

For $m \geq 1$, let $B$ = $\mathbb{Z}_{2} [b_1 , \cdots b_m,\cdots ]$ be the graded $\mathbb{Z}_{2}$-algebra, where $\deg b_i= i$, $i\geq 1$.  Often the convention $b_0=1$ is used. Let $L$ denote the $B$-algebra of all formal power series in variables
$y_1 , \cdots , y_n $, where for all $i$, $\deg y_i $ = $1$. That is, $L$ =  $B [[ y_1 , \cdots , y_n ]]  $. Let $Q(L)$ denote the quotient field of $L$.

 Define a $\mathbb{Z}_{2}$-algebra homomorphism $\gamma \colon \widetilde{R}_{*} (G) \rightarrow Q(L)$ as follows:

$$ \gamma(Y_{S})  = \frac{1}{\sum_{i \in S} y_{i}}  \sum_{r \geq 0} b_{r} \left( \sum_{i \in S} y_{i}\right) ^{r}.$$
 Tom Dieck (\cite{integ}) has proved that $\gamma \circ \eta \colon Z_{*} (G) \rightarrow Q(L)$ is injective and $\Im (\gamma \circ \eta) \subset L$.

Recall that an element of the equivariant cobordism algebra is said to be indecomposable if it cannot be written as a sum of product of lower dimensional equivariant cobordism classes.  G. Mukherjee and P. Sankaran  (\cite{flag}) gave a sufficient criterion for indecomposability of an element of $Z_{*}(G)$. With notations as above

\begin{thm}\label{indec}
 Let $[G, M^{d}] \in Z_{d}(G)$. Suppose for some $k > d$, either the coefficient of $b_{k}$ or the coefficient of $b_{k-1} b_{1}$, in $\gamma \circ \eta ([G, M^{d}])$, is non-zero. 
Then $[G,M] \in Z_{d}(G)$ is indecomposable. 
\end{thm}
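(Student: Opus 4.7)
My plan is to establish the contrapositive: if $[G, M^d] \in Z_d(G)$ is decomposable, then for every $k > d$ the coefficients of $b_k$ and of $b_{k-1}b_1$ in $\gamma \circ \eta([G, M^d])$ both vanish. The main ingredient will be a homogeneity estimate. Writing $s_S := \sum_{i\in S}y_i$, the formula $\gamma(Y_S) = s_S^{-1} + b_1 + b_2 s_S + b_3 s_S^2 + \cdots$ shows that in any product $\prod_{j=1}^{a}\gamma(Y_{S_j})$ the coefficient of a $b$-monomial $M = b_{i_1}\cdots b_{i_r}$ is a homogeneous rational function in the $y$'s of $y$-degree $|M| - a$, where $|M| := i_1 + \cdots + i_r$. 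Summing over the stationary points of $[G, A^a]$, the resulting element $\gamma \circ \eta([G, A^a])$ lies in $L = \Z_2[b_1, b_2, \ldots][[y_1,\ldots,y_n]]$, which has no negative $y$-powers, and this forces the coefficient of $M$ to vanish whenever $|M| < a$. In particular $[1]\,\gamma \circ \eta([G, A^a]) = 0$ for every $a \ge 1$, and $[b_1]\,\gamma \circ \eta([G, A^a]) = 0$ for every $a \ge 2$.

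The borderline case $a = 1$ requires a short topological input: $[b_1]\,\gamma \circ \eta([G, N^1])$ equals the number of stationary points of $N^1$ modulo $2$. A closed $1$-manifold is a disjoint union of circles, and any order-$2$ diffeomorphism of $S^1$ has exactly $0$ or $2$ fixed points, so intersecting fixed loci over the generators of $G = (\Z_2)^n$ shows that each circle contributes an even count to $|N^G|$. Hence $[b_1]\,\gamma \circ \eta([G, N^1]) = 0$, and combined with the previous paragraph, both $[1]\,\gamma \circ \eta([G, A^a])$ and $[b_1]\,\gamma \circ \eta([G, A^a])$ vanish for every $a \ge 1$.

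To finish, I would write a decomposition $[G, M^d] = \sum_i [G, N_i]\cdot[G, P_i]$ with $\dim N_i, \dim P_i \ge 1$ and set $\alpha_i := \gamma\circ\eta([G, N_i])$, $\beta_i := \gamma\circ\eta([G, P_i])$. Using the Leibniz-type identity $[M](\alpha_i \beta_i) = \sum_{M_1 M_2 = M} [M_1]\alpha_i \cdot [M_2]\beta_i$: applied to $M = b_k$ only the factorizations $(1, b_k)$ and $(b_k, 1)$ contribute, each containing the factor $1$; applied to $M = b_{k-1}b_1$ the factorizations $(1, b_{k-1}b_1)$, $(b_1, b_{k-1})$, $(b_{k-1}, b_1)$, $(b_{k-1}b_1, 1)$ each contain a factor in $\{1, b_1\}$. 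Since both $[1]$ and $[b_1]$ vanish on every $\alpha_i$ and $\beta_i$ by the previous step, every summand is zero, and summing over $i$ yields $[b_k]\,\gamma\circ\eta([G, M^d]) = 0 = [b_{k-1}b_1]\,\gamma\circ\eta([G, M^d])$, proving the contrapositive. The main obstacle I anticipate is the parity argument for $1$-dimensional classes, which is the only genuinely topological step; the rest is formal coefficient bookkeeping in $L$.
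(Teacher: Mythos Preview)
The paper does not actually prove this theorem; it is quoted without proof from Mukherjee--Sankaran \cite{flag}. So there is no ``paper's own proof'' to compare against, and your proposal supplies what the paper omits.

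Your argument is correct. The homogeneity bookkeeping is exactly right: each stationary point of an $a$-dimensional class contributes a product of $a$ factors $\gamma(Y_{S_j})=\sum_{r\ge0}b_r\,s_{S_j}^{\,r-1}$, so the coefficient of a $b$-monomial $M$ in $\gamma\circ\eta([G,A^a])$ is homogeneous of $y$-degree $|M|-a$; combined with tom Dieck's integrality $\Im(\gamma\circ\eta)\subset L$, this kills $[1]$ for all $a\ge1$ and $[b_1]$ for all $a\ge2$. The Leibniz step is then purely formal. In fact your argument never uses the hypothesis $k>d$: it shows that for a decomposable class the coefficients of $b_k$ and $b_{k-1}b_1$ vanish for \emph{every} $k\ge1$, which is slightly stronger than what the theorem asserts.

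The only point that deserves one more sentence is the $1$-dimensional parity step. Your phrase ``intersecting fixed loci over the generators'' is correct in spirit but hides the reason the intersection on an invariant circle cannot have exactly one point: since the generators $T_j$ commute, each $T_j$ permutes the (at most two-element) fixed set of any non-trivially acting $T_i$ on that circle, and an involution of a two-point set either swaps the points or fixes both. Hence $C^G$ has $0$ or $2$ elements on each $G$-invariant circle $C$, and the total count is even. (Alternatively, for a $2$-group action on a finite complex one has $\chi(M)\equiv\chi(M^G)\pmod 2$, and $\chi$ of a closed $1$-manifold is $0$.) With that clarification the proof is complete.
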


\section{$2$-Group Actions on Milnor Manifolds}

The Milnor manifold $\mathcal{H}(m,n)$, $m \leq n$, is defined to be the submanifold (of dimension $m+n-1$)of $ \mathbb{RP}^{m} \times \mathbb{RP}^{n}$, 
given by  $$\{ ( [x_0 , \cdots , x_m] , [y_0 , \cdots , y_n]) | \sum_{j = 0}^{m} x_j y_j =0 \}. $$
Tom Dieck (\cite{milnor}) has defined actions of 2-groups on $\mathcal{H}(m,n)$. We consider a special case. Fix $T_k$ ($1\leq k \leq n$) a set of $n$ generators on $(\Z_2)^n$. Define an action of $(\Z_2)^n$ on $\mathcal{H}(m,n)$ by :
$$ T_{k} ( [x_0 , \cdots , x_m] , [y_0 , \cdots , y_n])  = \left \{ \begin{array}{lr}
( [x_0 , \cdots, -x_k, \cdots , x_m] , [y_0 , \cdots, -y_k , \cdots , y_n])  &  k\leq m  \\
( [x_0 , \cdots,  x_m] , [y_0 , \cdots, -y_k , \cdots , y_n]), &  k > m \\
\end{array} \right..$$

We refer to the above action as $\phi$. Note that this action has $n(m+1)$ stationary points 
$$P_{i,j}= ([0, \cdots , 1, \cdots , 0], [0, \cdots , 1, \cdots , 0])$$
where the first $1$ is in the $i$-th place and the second $1$ is in the $j$-th place, $0 \leq i \leq m$, $0 \leq j \leq n$ and $i \neq j$. We will apply Theorem \ref{indec} to prove that in many cases the induced elements in the equivariant cobordism algebra are indecomposable. In this section we show that the equivariant cobordism class $[(\mathbb{Z}_{2})^n , \mathcal{H}(m,n)]$ is non trivial if $n \geq 3$, $m < n$.

Identify the tangent space to $\mathcal{H}(m,n)$ at the stationary point $P_{i,j}$, $T_{P_{i,j}}(\mathcal{H}(m,n))$, with $\mathbb{R}^{m} \times \mathbb{R}^{n}$,
with coordinates given by $(x_0 , \cdots, \hat{x_{i}}, \cdots , x_m, y_0 , \cdots, \hat{y_{j}} , \cdots , y_n)$. The Milnor manifold with these coordinates is the hypersurface 
$$y_{i} + \sum_{l \neq i} x_l y_l = 0 , \hspace*{0.2cm} j>m  $$ $$ y_{i} + x_{j} + \sum_{l \neq i,j } x_{l} y_{l} = 0 , \hspace*{0.2cm} j \leq m. $$
Therefore,  $T_{P_{i,j}}(\mathcal{H}(m,n))$ is the subspace of $\mathbb{R}^{m} \times \mathbb{R}^{n}$ determined by the equations
$$ y_i  = 0, \hspace*{0.2cm} j>m$$  $$ y_i + x_j = 0 , \hspace*{0.2cm} j \leq m.$$
Let $\{e_0 , \cdots, e_m , f_0 , \cdots f_n \} $ denote the standard basis of $\mathbb{R}^{m+1} \times \mathbb{R}^{n+1}$. Then a basis of $T_{P_{i,j}}(\mathcal{H}(m,n))$ may be written as $\{e_0 , \cdots , \hat{e}_{i}, \cdots e_m , f_0 , \cdots \hat{f_i}, \cdots  , \hat{f_{j}} , \cdots , f_n \}$ if $j > m$, and  as $$\{e_0 , \cdots , \hat{e}_{i},\hat{e}_j,\cdots e_m , f_0 , \cdots  ,\hat{f_i}, \hat{f_{j}} , \cdots , f_n \} +\{ f_i = -e_j \},~~ \mbox{if}~~ j \leq m.$$ 

Recall from Section \ref{rep} the notation $Y_{S}$ of the irreducible representation of $(\Z_2)^n$ of induced by $S$ (the corresponding homomorphism $Y_{S} \colon {(\mathbb{Z}_2)}^{n+1} \rightarrow \mathbb{Z}_2 \cong \{\pm 1\}$ satisfies $Y_{S} (T_{i})$ is equal to $-1$ if $i \in S$ and $1$ otherwise). In the notation above observe that the action of $(\Z_2)^n$ on  $\R \{e_l \} \subset T_{P_{i,j}}(\mathcal{H}(m,n))$  is given by $Y_{\{i,l\}}$ if $l>0$ and $Y_{\{i\}}$ if $l=0$. We make similar computations to deduce 
\begin{eqnarray}
\eta_{*} [(\Z_2)^n,\mathcal{H}(m,n)] & =&\prod_{i=1}^m Y_{\{i\}}(\sum_{j=1 }^{n}\prod_{k = 1 , k \neq j}^{n} Y_{\{k,j\}}) + \nonumber \\
& & \sum_{i =1}^{m} Y_{\{i\}} \prod_{k = 1 , k \neq i}^{m} Y_{\{k,i\}} (\prod_{l=1,l\neq i}^n Y_{\{l\}}+ \sum_{j=1 , j\neq i}^{n} Y_{\{j\}}\prod_{l=1 , l \neq i, j}^{n} Y_{\{l,j\}} ) \nonumber
\end{eqnarray}

Note that in the above expression the coefficient of $Y_{\{1\}}\cdots Y_{\{n\}}$ cannot be cancelled if $m\geq 3$ and $m=1$.  For $m=2$ the term $Y_{\{i\}}Y_{\{l\}}\prod_{j=1,j\neq i}^m Y_{\{j,i\}}\prod_{k=1,k\neq i,l}^n Y_{\{k,l\}}$ cannot be cancelled if $i<m,~l>m$.  Therefore $\eta_{*} \left( [(\mathbb{Z}_{2})^n , \mathcal{H}(m,n)] \right)$ cannot be zero if $m<n$. Hence we obtain,

\begin{prop}\label{nontriv}
 $[(\mathbb{Z}_{2})^n , \mathcal{H}(m,n)] \neq 0$ in $Z_{m+n-1} ((\mathbb{Z}_{2})^{n}) $ for  $m<n$.
\end{prop}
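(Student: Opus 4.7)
The plan is to apply Stong's theorem, which says that $\eta_* \colon Z_*(G) \to R_*(G)$ is injective. It therefore suffices to show that $\eta_*\bigl([(\Z_2)^n, \mathcal{H}(m,n)]\bigr)$ is nonzero inside the polynomial ring $\Z_2[Y_S \mid S \subseteq \underline{n}]$. The excerpt has already assembled this element as the explicit sum indexed by the $n(m+1)$ stationary points $P_{i,j}$, with each stationary point contributing a squarefree product of characters of the form $Y_{\{i\}}, Y_{\{j\}}, Y_{\{i,l\}}, Y_{\{j,l\}}$ determined by the tangent-space basis read off from the defining equation of $\mathcal{H}(m,n)$.

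What remains is a monomial-isolation argument: produce a single monomial in the $Y_S$'s whose coefficient in the sum is the nonzero element of $\Z_2$. Since we are working modulo $2$, this reduces to showing that the chosen monomial arises an odd number of times across the stationary-point contributions. I would split on the size of $m$. For $m = 1$ or $m \geq 3$, I would track a monomial whose singleton factors are $Y_{\{1\}} Y_{\{2\}} \cdots Y_{\{n\}}$ together with the appropriate double factors $Y_{\{k,1\}}$; scanning the families of terms in the formula, only a single stationary point (in the second summand, with $i = 1$) can produce a monomial with this singleton pattern, so its coefficient is $1 \in \Z_2$. For $m = 2$, the previous monomial pairs up and cancels, so I would instead track the mixed monomial $Y_{\{i\}} Y_{\{l\}} \prod_{j \neq i,\, j \leq m} Y_{\{j,i\}} \prod_{k \neq i,l,\, 1 \leq k \leq n} Y_{\{k,l\}}$ for a fixed $i \leq m$ and $l > m$; the rigid singleton-double combination pins the contributing stationary point down uniquely.

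The main obstacle is the $m = 2$ case, where one must hunt for a more exotic surviving monomial because the simplest choice cancels; by contrast the $m = 1$ and $m \geq 3$ cases are nearly self-evident once the formula for $\eta_*$ is in hand. The $m = 2$ case is also where the hypothesis $m < n$ enters essentially, since the chosen monomial requires an index $l$ with $l > m$ to exist. Everything else reduces to mechanical bookkeeping over the explicit formula computed in the excerpt, using that $\Z_2$-coefficients allow us to conclude as soon as we find any monomial that appears an odd number of times.
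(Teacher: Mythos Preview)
Your proposal is correct and follows essentially the same approach as the paper: both invoke Stong's injectivity of $\eta_*$, split into the cases $m=1$ or $m\ge 3$ versus $m=2$, and in each case isolate the same surviving monomial (the full singleton product $Y_{\{1\}}\cdots Y_{\{n\}}$ with the doubles $Y_{\{k,1\}}$ in the first case, and the mixed monomial $Y_{\{i\}}Y_{\{l\}}\prod_{j\neq i}^{m}Y_{\{j,i\}}\prod_{k\neq i,l}^{n}Y_{\{k,l\}}$ with $i\le m<l$ in the second). Your remark that the hypothesis $m<n$ is used precisely to guarantee an index $l>m$ in the $m=2$ case matches the paper's condition $i<m,\ l>m$.
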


Next we consider actions of $(\Z_2)^r$ on $\mathcal{H}(m,n)$ by pulling back the above action via group homomorphisms $(\Z_2)^r\rightarrow (\Z_2)^n$. To get suitable formulae we represent such a homomorphism $\Lambda$ as  $ \Lambda_{S_1 , \cdots , S_n }$ for certain subsets $S_i$ of $\underline{r}$ described as 
$$ \Lambda_{S_1 , \cdots , S_n } \colon (\mathbb{Z}_{2})^r \rightarrow (\mathbb{Z}_{2})^n$$ 
$$ T_{i} \mapsto \prod_{i \in S_{j}} T_{j}$$ 
We call the pullback action of $\phi$ on $\mathcal{H}(m,n)$ $ \phi_{S_1 , \cdots , S_n }$. In order to obtain an element of $Z_{m+n-1}((\Z_2)^r)$ we need a condition when such an action will have finite stationary points. This is the content of the following proposition. 

\begin{prop}\label{prp32}
If $S_1 , \cdots , S_n $ are distinct non-empty subsets of $\underline{r}$ then the stationary points of  $ \phi_{S_1 , \cdots , S_n }$ are precisely $P_{i,j}$ $0\leq i \leq m,~0\leq j \leq n,~i\neq j$.
\end{prop}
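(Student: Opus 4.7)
The plan is to analyze exactly which points of $\mathcal{H}(m,n)\subset \mathbb{RP}^m\times \mathbb{RP}^n$ are fixed by the pullback action, by translating the fixed-point condition into a combinatorial statement about the subsets $S_j$.

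First I would spell out how $(\Z_2)^r$ acts in coordinates. The generator $T_i$ of $(\Z_2)^r$ acts via $\Lambda(T_i)=\prod_{j:i\in S_j}T_j$, and referring to the action formulas for the $T_j$ defining $\phi$, this sends $x_k\mapsto -x_k$ for each $1\le k\le m$ with $i\in S_k$ and $y_k\mapsto -y_k$ for each $1\le k\le n$ with $i\in S_k$, while fixing $x_0$ and $y_0$. Writing $I_x=\{k:x_k\neq 0\}\subset\{0,\dots,m\}$ and $I_y=\{k:y_k\neq 0\}\subset\{0,\dots,n\}$, the point $([x_0,\dots,x_m],[y_0,\dots,y_n])$ is fixed by $T_i$ (via $\Lambda$) in projective coordinates iff there exist signs $\epsilon_x(i),\epsilon_y(i)\in\{\pm1\}$ so that the signs $T_i$ applies to the nonzero $x_k$'s all equal $\epsilon_x(i)$, and similarly for $y$.

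Next, I would turn this into a combinatorial condition on the $S_k$. The sign $T_i$ applies to $x_k$ (for $k\ge 1$) is $-1$ iff $i\in S_k$, and the sign on $x_0$ is $+1$. Hence, fixedness under all $T_i\in(\Z_2)^r$ is equivalent to: for every $i\in\underline{r}$, the function $k\mapsto \mathbf{1}_{i\in S_k}$ is constant on $I_x\setminus\{0\}$, and the constant equals $0$ whenever $0\in I_x$. In the case $0\in I_x$, this forces $S_k=\emptyset$ for every $k\in I_x\setminus\{0\}$, which contradicts non-emptiness unless $I_x\setminus\{0\}=\emptyset$. In the case $0\notin I_x$, allowing $i$ to vary over $\underline{r}$ yields $S_k=S_{k'}$ for all $k,k'\in I_x$, which by distinctness forces $|I_x|\le 1$, hence $|I_x|=1$. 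Either way $[x_0,\dots,x_m]$ has exactly one nonzero homogeneous coordinate, and the same argument applies to $[y_0,\dots,y_n]$.

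Finally, I would match these candidates against the defining equation $\sum_{j=0}^m x_jy_j=0$ of $\mathcal{H}(m,n)$. If $x_i\neq 0$ is the unique nonzero entry in the first factor and $y_j\neq 0$ the unique one in the second, the sum equals $x_iy_i$, which is zero precisely when $i\neq j$ (either because $j>m$, or because $j\le m$ with $j\neq i$). This yields exactly the list $P_{i,j}$ with $0\le i\le m$, $0\le j\le n$, $i\neq j$, and conversely each such $P_{i,j}$ is fixed because both of its nonzero homogeneous coordinates are supported in a single index.

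The main obstacle is purely bookkeeping: keeping straight the special role of the coordinates $x_0,y_0$ (which are fixed by every $T_k$) separately from $x_k,y_k$ with $k\ge 1$, so that one correctly invokes non-emptiness of the $S_j$ in the case $0\in I_x$ and distinctness of the $S_j$ in the case $0\notin I_x$. Once those two cases are handled cleanly, the rest is immediate.
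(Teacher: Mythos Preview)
Your argument is correct and follows essentially the same route as the paper's proof: the key point in both is that distinctness and non-emptiness of the $S_j$ guarantee that any two homogeneous coordinates (including the $0$th) can be separated by some generator $T_i$, forcing each projective factor of a fixed point to have a single nonzero coordinate. Your version is simply a more detailed unpacking of this one-line observation, together with the (easy) verification of which such points actually lie on $\mathcal{H}(m,n)$, which the paper leaves implicit.
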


\begin{proof}
When $S_i$ are all distinct and non-empty,  for any two coordinates there is a $T_i$ which multiplies as 1 on one and by -1 on the other. It follows that a point $([x_0,\cdots,x_m],[y_0,\cdots,y_n])$ can be fixed only if only one $x_i$ is non-zero and only one $y_j$ is non-zero.


\end{proof}

Note that the composition 
$$(\Z_2)^r \stackrel{\Lambda_{S_1, \cdots , S_n }}{\rightarrow} (\Z_2)^n \stackrel{Y_{\{i\}}}{\rightarrow} \Z_2$$
is $Y_{S_i}$ ($1\leq i \leq n$). The composition 
$$(\Z_2)^r \stackrel{\Lambda_{S_1, \cdots , S_n }}{\rightarrow} (\Z_2)^n \stackrel{Y_{\{i,j\}}}{\rightarrow} \Z_2$$
is $Y_{S_i\Delta S_j}$ where $S_i\Delta S_j$ equals the symmetric difference of the sets $S_i$ and $S_j$ ($S_i\Delta S_j=(S_i -S_j) \cup (S_j - S_i)$). It follows that 

\begin{eqnarray}
\eta_{*} [(\mathbb{Z}_{2})^{r} , \mathcal{H}(m,n), \varphi_{S_1 , \cdots , S_{n}} ] & = & \prod_{i=1}^m Y_{S_i}(\sum_{j=1 }^{n}\prod_{k = 1 , k \neq j}^{n} Y_{S_k\Delta S_j}) + \nonumber\\
&   & \sum_{i =1}^{m} Y_{S_i} \prod_{k = 1 , k \neq i}^{m} Y_{S_k\Delta S_i} (\prod_{l=1,l\neq i}^n Y_{S_l}+ \sum_{j=1 , j\neq i}^{n} Y_{S_j}\prod_{l=1 , l \neq i, j}^{n} Y_{S_l \Delta S_j}). \nonumber
\end{eqnarray}

It is not hard to put conditions on $S_i$ for which the above expression is not zero. As an example if all the sets $S_i$ and $S_k\Delta S_l$ are different. Then the argument for Proposition \ref{nontriv} yields
$$  [(\mathbb{Z}_{2})^{r} , \mathcal{H}(m,n), \phi_{S_1 , \cdots , S_{n}} ]  \neq 0. $$

\section{Indecomposability of Certain Classes}

\noindent
In this section, we use Milnor manifolds to  construct indecomposable elements in the equivariant cobordism algebra $Z_*((\Z_2)^n)$ in dimensions up to $2^{n} - 5$. We begin with the lemma

\begin{lemma}\label{lem41}
 With $\mathbb{Z}_{2}$ coefficients, we have the following:
\begin{enumerate}[(i)]
 \item $$\sum_{i} \frac{1}{\prod_{j \neq i} (y_{j} - y_{i})}  = 0,$$
\item $$\frac{1}{y_{1} \cdots y_{n}} + \sum_{i=1}^{n} \frac{1}{y_{i} \prod_{j \neq i} (y_{i} + y_{j})} = 0,$$
\item $$ \sum_{j = 1}^{n} \frac{y_{i}^{k}}{\prod_{j \neq i} (y_{i} + y_{j})}  = \left \{ \begin{array}{lr}
1  &  k=n  \\
0  &  k <n  \\
\end{array} \right..$$
\end{enumerate}

\begin{proof}
The proof is by induction.\\ 
Let $\sum_{i} \frac{1}{\prod_{j \neq i} (y_{j} - y_{i})}$ =: $p(y_{1},\cdots . y_{n})$. Then 
$$\frac{p(y_{1},\cdots . y_{n})}{y_{1} + y_{n+1}} = p(y_{1},\cdots . y_{n+1}) + \frac{p(y_{2},\cdots . y_{n})}{y_{1} + y_{n+1}}.$$
Applying induction hypothesis to $p(y_{1},\cdots . y_{n})$ and  $p(y_{2},\cdots . y_{n+1})$, we get \\
$p(y_{1},\cdots . y_{n+1})$ = $0$.
Hence we have $(i)$. \\
The proof of $(ii)$ is similar.\\
To prove $(iii)$, define $q_{k}(y_{1},\cdots . y_{n}) := \sum_{j = 1}^{n} \frac{y_{i}^{k}}{\prod_{j \neq i} (y_{i} + y_{j})}$. We have
 
$$q_{k}(y_{1},\cdots . y_{n+1}) = \frac{q_{k}(y_{1},\cdots . y_{n}) + q_{k}(y_{2},\cdots . y_{n+1})}{y_{1} + y_{n+1}}.$$

Then, once again, the required statement follows by induction. 
\end{proof}

\end{lemma}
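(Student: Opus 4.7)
The plan is to prove all three identities by induction on $n$, using a common recursive reduction: in each case, one writes the $(n{+}1)$-variable sum as $(y_1 + y_{n+1})^{-1}$ times the sum of the two $n$-variable versions on $\{y_1,\ldots,y_n\}$ and $\{y_2,\ldots,y_{n+1}\}$. Once this recursion is in place, the induction is immediate for parts (i) and (ii), because the hypothesis ``sum vanishes'' propagates trivially under division by $y_1 + y_{n+1}$. For part (iii) the same recursion is used; the case $k<n$ propagates by the same vanishing argument, and the case at the critical exponent comes out correctly because two equal $\mathbb{Z}_2$-contributions from the two smaller sums cancel when combined.

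The recursion in every part rests on the single $\mathbb{Z}_2$ partial-fraction identity
\[
\frac{1}{(y_1+y_i)(y_{n+1}+y_i)} \;=\; \frac{1}{y_1+y_{n+1}}\Bigl(\frac{1}{y_1+y_i}+\frac{1}{y_{n+1}+y_i}\Bigr),
\]
valid in the quotient field $Q(L)$. To deduce the recursion I would split the $(n{+}1)$-variable sum according to its summation index $i \in \{1,\ldots,n+1\}$. The two boundary indices $i=1$ and $i=n+1$ contribute terms whose denominators differ from the corresponding term in one of the two smaller sums by the single extra factor $y_1+y_{n+1}$, and hence they drop out by direct division. The interior indices $2\le i \le n$ appear in both smaller sums with a common factor $\prod_{2\le j\le n,\, j\ne i}(y_i+y_j)$, and the partial-fraction identity combines $\tfrac{1}{y_1+y_i}+\tfrac{1}{y_{n+1}+y_i}$ into $\tfrac{y_1+y_{n+1}}{(y_1+y_i)(y_{n+1}+y_i)}$, yielding precisely the $i$-th summand of the $(n{+}1)$-variable sum after dividing by $y_1+y_{n+1}$.

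For part (ii) the extra summand $1/(y_1\cdots y_n)$ fits seamlessly into the same framework, since a direct check gives $\frac{1}{y_1+y_{n+1}}\bigl(\tfrac{1}{y_1\cdots y_n}+\tfrac{1}{y_2\cdots y_{n+1}}\bigr) = \tfrac{1}{y_1\cdots y_{n+1}}$; thus the full left-hand side of (ii) obeys the same recursion as in (i), and the same induction closes the argument. The base cases in each part are direct computations for small $n$. The main obstacle I anticipate is the careful combinatorial bookkeeping of exactly which denominator factors appear in which of the two smaller sums when splitting the interior contributions, together with verifying that the division by $y_1+y_{n+1}$ is legitimate at every stage (which it is, since the combined numerator visibly contains $y_1+y_{n+1}$ as a factor after applying the partial-fraction identity). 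Once this accounting is set up, all three parts reduce to the same one-line inductive step driven by the identity above.
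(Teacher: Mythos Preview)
Your approach is essentially identical to the paper's: both prove all three parts by induction on $n$ via the same recursion
\[
f(y_1,\ldots,y_{n+1}) \;=\; \frac{f(y_1,\ldots,y_n) + f(y_2,\ldots,y_{n+1})}{y_1+y_{n+1}},
\]
with the paper simply asserting this relation while you supply the partial-fraction justification. Your extra observation for (ii), that $\tfrac{1}{y_1+y_{n+1}}\bigl(\tfrac{1}{y_1\cdots y_n}+\tfrac{1}{y_2\cdots y_{n+1}}\bigr)=\tfrac{1}{y_1\cdots y_{n+1}}$, and your remarks on the critical exponent in (iii) are details the paper leaves implicit, but the underlying argument is the same.
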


Now consider the action of $(\mathbb{Z}_{2})^{n}$ on $\mathcal{H}(m,n)$, where $m \leq n$, given by $\phi$. Recall that this action has $(m+1)n$ fixed points, $P_{i,j}$, where $i \neq j$. 
The sum of the tangential representations at these points is given by 
\begin{eqnarray}\label{tang}
\eta_{*} [(\Z_2)^n,\mathcal{H}(m,n)] & =&\prod_{i=1}^m Y_{\{i\}}(\sum_{j=1 }^{n}\prod_{k = 1 , k \neq j}^{n} Y_{\{k,j\}}) + \nonumber \\
& & \sum_{i =1}^{m} Y_{\{i\}} \prod_{k = 1 , k \neq i}^{m} Y_{\{k,i\}} (\prod_{l=1,l\neq i}^n Y_{\{l\}}+ \sum_{j=1 , j\neq i}^{n} Y_{\{j\}}\prod_{l=1 , l \neq i, j}^{n} Y_{\{l,j\}}). 
\end{eqnarray}

We have shown in the previous section that  $[(\Z_2)^n , \mathcal{H}(m,n) , \phi]$ does not bound. Now, we check the indecomposability of the cobordism class $[(\Z_2)^n , \mathcal{H}(m,n) , \phi]$ in $Z_{*}(\mathbb{Z}_{2}^{n})$
by applying $$\gamma \colon \widetilde{R}_{*} (\mathbb{Z}_{2}^{n}) \rightarrow Q(L)~~ \mbox{to}~~   \eta_{*} [(\Z_2)^n,\mathcal{H}(m,n), \phi]$$.

Recall that 
$$ \gamma(Y_{S})  = \frac{1}{\sum_{i \in S} y_{i}}  \sum_{r \geq 0} b_{r} \left( \sum_{i \in S} y_{i}\right) ^{r}.$$

\begin{exam} ($m=1$) We have 
$$\eta_{*} [(\Z_2)^n,\mathcal{H}(1,n)]  = Y_{\{1\}}(\sum_{j=1 }^{n}\prod_{k = 1 , k \neq j}^{n} Y_{\{k,j\}}) +   Y_{\{1\}} (\prod_{l=2}^n Y_{\{l\}}+ \sum_{j=2}^{n} Y_{\{j\}}\prod_{l=2 , l \neq  j}^{n} Y_{\{l,j\}}). $$
We calculate the coefficient of $b_m$ in $\gamma(\eta_{*} [(\Z_2)^n,\mathcal{H}(1,n)])$ for $m\gg 0$ is a power of $2$. For such an expression $\Delta$  we use the notation $b_m(\Delta)$ for the coefficient of $b_m$ in $\Delta$. Then 
$$b_m(\gamma(\eta_{*} [(\Z_2)^n,\mathcal{H}(1,n)]) = \frac{1}{y_1}b_m(\Delta_1) + y_1^{m-1}b_0(\Delta_1) + \frac{1}{y_1}b_m(\Delta_2) + y_1^{m-1}b_m(\Delta_2)$$
where 
$$\Delta_1= \gamma ((\sum_{j=1 }^{n}\prod_{k = 1 , k \neq j}^{n} Y_{\{k,j\}})), ~ \Delta_2 = \gamma  (\prod_{l=2}^n Y_{\{l\}}+ \sum_{j=2}^{n} Y_{\{j\}}\prod_{l=2 , l \neq  j}^{n} Y_{\{l,j\}}).$$
Note from Lemma \ref{lem41} it follows that $b_0(\Delta_1)=0$ and $b_0(\Delta_2)=0$ and since $m$ is a power of $2$ with $\Z_2$ coefficients we have
$$b_m(\Delta_1)= \sum_{j=1 }^{n}\frac{\sum_{k = 1 , k \neq j}^{n} (y_k^m+y_j^m)}{\prod_{k = 1 , k \neq j}^{n} (y_k +y_j)} =  \sum_{j=1 }^{n}\frac{ny_j^m + \sum_{k = 1}^{n} y_k^m}{\prod_{k = 1 , k \neq j}^{n} (y_k +y_j)}$$
$$b_m(\Delta_2) = \frac{ \sum_{l=2}^n y_l^m}{\prod_{l=2}^n y_l}+ \sum_{j=2}^{n} \frac{y_j^m+ \sum_{l=2 , l \neq  j}^{n} y_l^m+ y_j^m}{y_j\prod_{l=2 , l \neq  j}^{n} (y_l+y_j)}, $$
which reduces by Lemma \ref{lem41} to  
$$b_m(\Delta_1)=\sum_{j=1 }^{n}\frac{ny_j^m }{\prod_{k = 1 , k \neq j}^{n} (y_k +y_j)} ,~b_m(\Delta_2) = \sum_{j=2}^{n} \frac{ny_j^m}{y_j\prod_{l=2 , l \neq  j}^{n} (y_l+y_j)}.$$
If $n$ is odd, the above expressions indicate that the terms containing $y_n^m$ cannot cancel out. This implies that $\mathcal{H}(1,n)$ is an indecomposable element of $Z_n((\Z_2)^n)$ if $n$ is odd. 
\end{exam}

\mbox{ }\\

We extend the results of the above example in the following Proposition 

\begin{prop}\label{ind1}
 The equivariant cobordism class $[(\mathbb{Z}_{2})^{n} , \mathcal{H}(m,n) , \phi]$ is  indecomposable in $$Z_{m+n-1}((\mathbb{Z}_{2})^{n}),~~\mbox{if}~~ m \leq n-2.$$
\end{prop}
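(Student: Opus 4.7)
The strategy is to apply the indecomposability criterion of Theorem~\ref{indec} to $[(\Z_2)^n, \mathcal{H}(m,n), \phi] \in Z_{m+n-1}((\Z_2)^n)$. Accordingly, I must exhibit some $k > m+n-1$ for which either the coefficient of $b_k$ or of $b_{k-1}b_1$ in $\gamma(\eta_{*}[(\Z_2)^n, \mathcal{H}(m,n), \phi])$ is nonzero. As in the $m=1$ example, I plan to take $k$ to be a power of $2$ exceeding $m+n-1$; this reduces $(y_i+y_j)^k$ to $y_i^k + y_j^k$ over $\Z_2$ and makes the identities in Lemma~\ref{lem41} directly applicable.

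The first step is to extract the coefficient of $b_k$ from each summand of \eqref{tang} after applying $\gamma$. Since
$$\gamma(Y_S) = \frac{1}{y_S} + b_1 + b_2 y_S + b_3 y_S^2 + \cdots,$$
the coefficient of $b_k$ in a product $\prod_{a} \gamma(Y_{S_a})$ equals $\bigl(\prod_a y_{S_a}\bigr)^{-1} \sum_{a^*} y_{S_{a^*}}^k$, since for $k$ larger than the number of factors the only contribution to a pure $b_k$ comes from choosing a single factor to supply $b_k$ and taking the $1/y_S$ term from every other factor. Multiplying out, this produces an explicit rational sum in $y_1, \ldots, y_n$, organised by which variable carries the power $k$.

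The next step is to collapse this sum using Lemma~\ref{lem41}: large blocks vanish by (i) and (ii), while (iii) isolates the top-degree survivors, exactly as in the worked example for $m=1$. I would then attempt to identify a monomial of shape $y_n^k / Q(y_1,\ldots,y_n)$ that appears with odd multiplicity in $\Z_2$. The hypothesis $m \leq n-2$ should enter precisely here: it guarantees that at least two indices $n-1,n$ lie outside $\{1,\ldots,m\}$, breaking the symmetry enough that certain $y_n^k$-terms have denominators matched by no other summand and so cannot cancel. Should the parity of $n$ kill the $b_k$-coefficient (as happens for $n$ even already at $m=1$), the same recipe applied to the coefficient of $b_{k-1}b_1$ should succeed, since the extra factor of $b_1$ contributes an additional $y_S$ in the numerator and shifts the parity of every rational term involved.

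The main obstacle I anticipate is combinatorial bookkeeping: the expression \eqref{tang} contains four nested summations, and after multiplying through and applying $\gamma$ the number of rational summands is considerable. Grouping them so that Lemma~\ref{lem41} can be applied block by block, and then verifying that a specific $y_n^k$-survivor really does escape every cancellation modulo $2$, is the substantive calculation. Once such a survivor is exhibited, Theorem~\ref{indec} immediately yields indecomposability.
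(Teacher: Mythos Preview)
Your plan has the right skeleton (apply $\gamma$, extract a single $b_k$, simplify with Lemma~\ref{lem41}, invoke Theorem~\ref{indec}), and your formula for the $b_k$–coefficient of a product of $\gamma(Y_S)$'s is correct. The gap is in the choice of target index. Taking $k$ itself to be a power of $2$ reproduces exactly the situation of the $m=1$ example, and there you already saw the answer is $n$ times a nonzero expression---so for even $n$ the coefficient of $b_k$ vanishes identically. Your fallback to $b_{k-1}b_1$ is only a hope: with $k-1$ one less than a power of $2$, the binomial $(y_i+y_j)^{k-1}$ is the full sum $\sum_{a+b=k-1} y_i^a y_j^b$, none of Lemma~\ref{lem41} applies cleanly, and ``shifts the parity of every rational term'' is not an argument. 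So as written the proposal does not close.

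The paper's proof avoids this parity obstruction by a different choice: it looks at the coefficient of $b_{N+m}$ with $N$ a large power of $2$, not $b_N$ itself. The extra shift by $m$ is exactly what produces a surviving term independent of $n\bmod 2$. A second structural point you are missing is the factorization. Each summand in \eqref{tang} is a product of a ``short'' block of $m$ factors (the $Y_{\{i\}}$'s or $Y_{\{i\}}\prod Y_{\{k,i\}}$'s, coming from the $\mathbb{RP}^m$ side) and a ``long'' block of $n-1$ factors (coming from the $\mathbb{RP}^n$ side). The $b_0$–part of each long block is precisely one of the sums in Lemma~\ref{lem41}(i),(ii) and therefore vanishes; hence the single $b_{N+m}$ must land in the long block, and the short block contributes only its constant term $1/(y_1\cdots y_m)$ or $1/(y_i\prod(y_i+y_j))$. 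This halves the bookkeeping at the outset. After that reduction, the paper tracks the aggregate coefficient of $y_1^{N}$ (not $y_n^{k}$): every term carrying $y_1^N$ has the common denominator $y_1\prod_{j\ge 2}(y_1+y_j)$, the remaining numerator is a short Vandermonde-type sum over $i\le m$ to which Lemma~\ref{lem41}(iii) applies, and the hypothesis $m\le n-2$ enters precisely to force that sum to be $0$ for the lower shifts and $1$ at the top shift $l=m$.
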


\begin{proof}
Recall from (\ref{tang}) the formula for the sum of tangential representations. Let $N \gg 0$ be a power of $2$ and $l \leq m \leq n-2 $. Then
\begin{eqnarray}
b_{N+l}(\gamma( \eta_{*} [(\Z_2)^n,\mathcal{H}(m,n)])) &=& b_0(\Delta_1)b_{N+l}(\Delta_2)+b_{N+l}(\Delta_1)b_0(\Delta_2) \nonumber \\
& &+ \sum_{i=1}^m( b_0(\Delta_{\{j,1\}})b_{N+l}(\Delta_{\{j,2\}})+b_{N+l}(\Delta_{\{j,1\}})b_0(\Delta_{\{j,2\}}))\nonumber
\end{eqnarray}
where 
$$\Delta_1 = \gamma(\prod_{i=1}^m Y_{\{i\}}),~ \Delta_2 = \gamma (\sum_{j=1 }^{n}\prod_{k = 1 , k \neq j}^{n} Y_{\{k,j\}}) $$
$$\Delta_{\{j,1\}}= \gamma(  Y_{\{j\}} \prod_{k = 1 , k \neq j}^{m} Y_{\{k,j\}}), ~ \Delta_{\{j,2\}}=\gamma(\prod_{l=1,l\neq i}^n Y_{\{l\}}+ \sum_{j=1 , j\neq i}^{n} Y_{\{j\}}\prod_{l=1 , l \neq i, j}^{n} Y_{\{l,j\}}). $$
It follows from Lemma \ref{lem41} that $b_0(\Delta_2)=0$ and $b_0(\Delta_{\{j,2\}})=0$. Define
$$A:=b_0(\Delta_1) b_{N+l}(\Delta_2)= \frac{1}{y_{1}\cdots  y_{m}}b_{N+l} (\sum_{i=1 }^{n}\gamma (\prod_{j = 1 ,  j \neq i}^{n} Y_{\{j,i \}}))$$
 $$B := b_0(\Delta_{\{j,1\}}b_{N+l}(\Delta_{\{j,2\}}) = \sum_{i=1}^{m} \frac{1}{y_{i}\prod_{j=1,j\neq i}^{m}(y_i + y_j)} b_{N+l}(\gamma  ( \prod_{k \neq i } Y_{\{ k \}} + \sum_{l \neq i} Y_{\{ l \}} \prod_{j=1 , j \neq i,l}^{n} Y_{\{ l,j \}} )).$$



We simplify the expression for $A$, using Lemma \ref{lem41} and that $k \leq l < n$ to get
$$ A = \frac{\sum_{k=1}^{l} {l \choose k} \sum_{j = 1}^{n} y_{j}^{l-k} }{y_1 \cdots y_{m}} \left( \frac{\sum_{i=1}^{n} y_{i}^{N+k}}{ \prod_{j\neq i} (y_{i} + y_{j})}\right). $$


Similarly for $B$ using Lemma \ref{lem41} $(iii)$, we have
\begin{eqnarray}
 B &=& \sum_{i=1}^{m} \frac{1}{y_{i} \prod_{j\neq i} (y_{i} + y_{j})} \left[ \frac{\sum_{l \neq i} y_{l}^{N+l} +\sum_{k=1}^{l} {l \choose k } y_{l}^{N + k} \sum_{j \neq i} y_{j}^{l-k}}{y_{l} \prod_{j\neq i,l} (y_{l} + y_{j})} \right]. \nonumber
\end{eqnarray}

Therefore, 
\begin{eqnarray}
 \gamma(\eta_{*} [(\Z_2)^n,\mathcal{H}(m,n)]) &=& \frac{\sum_{k=1}^{l} {l \choose k} \sum_{j = 1}^{n} y_{j}^{l-k} }{y_1 \cdots y_{m}} \left( \frac{\sum_{i=1}^{n} y_{i}^{N+k}}{ \prod_{j\neq i} (y_{i} + y_{j})}\right) \nonumber \\
& & + \sum_{i=1}^{m} \frac{1}{y_{i} \prod_{j\neq i} (y_{i} + y_{j})} \left[ \frac{\sum_{l \neq i} y_{l}^{N+l} +\sum_{k=1}^{l} {l \choose k } y_{l}^{N + k} \sum_{j \neq i} y_{j}^{l-k}}{y_{l} \prod_{j\neq i,l} (y_{l} + y_{j})} \right]. \nonumber 
\end{eqnarray}

In the above equation we assemble together powers of $y_1$ of degree $\ge N$. Call this coefficient $C_N$. This is given by 
$$C_N = \frac{\sum_{k=1}^{l} {l \choose k} \sum_{j = 1}^{n} y_{j}^{l-k} }{y_1 \cdots y_{m} \prod_{j=2}^{n} (y_{1} + y_{j})} + \sum_{i=2}^{m} \frac{1}{y_{1} \prod_{j =2, j\neq i}^{n} (y_{i} + y_{j})} \left[ \frac{ y_{i}^{l-k} + \sum_{k=1}^{l} {l \choose k }  \sum_{j \neq i} y_{j}^{l-k}}{y_{i} \prod_{j=2}^{n} (y_{i} + y_{j})} \right]. $$
 Simplifying, using Lemma \ref{lem41}, we get
 $$ C_N = \frac{{l \choose k}} {y_{1} \prod_{j=2}^{n} (y_{1} + y_{j})} \sum_{i=2}^{m} \frac{y_{i}^{l-k-1}}{ \prod_{j =2, j\neq i}^{n} (y_{i} + y_{j})}. $$
Therefore, again applying Lemma \ref{lem41} $(iii)$, we get
$$ C_N = \left \{ \begin{array}{lr}
0,  &  l-k <  m  \\
\frac{1} {y_{1} \prod_{j=2}^{n} (y_{1} + y_{j})},  &  l=m,~ k=0 \\
\end{array} \right..$$

This proves that the coefficient of $b_{N+m}$ contains the term $\frac{y_{1}^{N}} {y_{1} \prod_{j=2}^{n} (y_{1} + y_{j})}$, which cannot be cancelled. 
That is, the coefficient of $b_{N+m}$ in  $\gamma \circ \eta_{*} ([(\mathbb{Z}_{2})^{n} , \mathcal{H}(m,n) , \phi])$ is non-zero. Therefore, using Theorem \ref{indec}, the proof of the Proposition is complete. 
\end{proof}

Now suppose $\psi \colon (\mathbb{Z}_{2})^{k} \rightarrow (\mathbb{Z}_{2})^{n}$ given by $n$ distinct, non-empty subsets $S_1 , \cdots S_n \subset \underline{k}$, such that $S_{1}$ = $\{ 1 \}$
and $S_{2} , \cdots , S_{n} \subset \{ 2, \cdots , k \}$. Then there is an induced action of $ (\mathbb{Z}_{2})^{k} $ on $\mathcal{H}(m,n)$, which we will denote by $\psi \circ \phi$. Then, we have the following :

\begin{thm}\label{ind2}
 Let $0 < m \leq n-2 \leq 2^{k-1} - 3 $, and $\psi \colon (\mathbb{Z}_{2})^{k} \rightarrow (\mathbb{Z}_{2})^{n}$ defined as above. Then the class of the induced action $[(\mathbb{Z}_{2})^{k} , \mathcal{H}(m,n) , \psi \circ \phi]$ is indecomposable in $Z_{m+n-1}((\mathbb{Z}_{2})^{k})$. Therefore, in the equivariant cobordism algebra $Z_{*}((\mathbb{Z}_{2})^{k})$, there exists indecomposable elements in degrees at least up to $2^{k} - 5$.
\end{thm}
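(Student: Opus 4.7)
The plan is to reduce indecomposability of the pulled-back class to Proposition~\ref{ind1} via a coordinate substitution, and then derive the statement on the range of dimensions by a simple counting argument. The key observation is that the formula for $\eta_*[(\mathbb{Z}_2)^k, \mathcal{H}(m,n), \psi \circ \phi]$ recorded in Section~3 is obtained from (\ref{tang}) by the replacements $Y_{\{i\}} \mapsto Y_{S_i}$ and $Y_{\{k,j\}} \mapsto Y_{S_k \Delta S_j}$. Applying $\gamma$, and using that $\gamma(Y_T)$ depends only on $\sum_{l \in T} y_l$, together with the identity $\sum_{l \in S_i \Delta S_j} y_l = z_i + z_j$ over $\mathbb{Z}_2$ (with $z_i := \sum_{l \in S_i} y_l$), the resulting element of $Q(L)$ is obtained from $\gamma \circ \eta_*[(\mathbb{Z}_2)^n, \mathcal{H}(m,n), \phi]$ by performing the substitution $y_i \mapsto z_i$ everywhere.

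Transferring the computation of Proposition~\ref{ind1} through this substitution, the witness rational function $\tfrac{y_1^N}{y_1 \prod_{j=2}^n (y_1+y_j)}$ that survived all cancellations via Lemma~\ref{lem41} becomes
\[
\frac{z_1^N}{z_1 \prod_{j=2}^n (z_1 + z_j)} \;=\; \frac{y_1^{N-1}}{\prod_{j=2}^n (y_1 + z_j)},
\]
where the equality uses $S_1 = \{1\}$, so $z_1 = y_1$. The hypothesis $S_j \subset \{2, \ldots, k\}$ for $j \geq 2$ ensures that each $z_j$ lies in $\mathbb{Z}_2[y_2, \ldots, y_k] \setminus \{0\}$, so the displayed expression is a nonzero rational function. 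Consequently the coefficient of $b_{N+m}$ in $\gamma \circ \eta_*[(\mathbb{Z}_2)^k, \mathcal{H}(m,n), \psi \circ \phi]$ is nonzero for $N$ a sufficiently large power of~$2$, and taking $N+m > m+n-1$ allows Theorem~\ref{indec} to conclude indecomposability.

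The main obstacle is to justify that the chain of Lemma~\ref{lem41} identities invoked in the proof of Proposition~\ref{ind1} remains intact under the substitution $y_i \mapsto z_i$. This reduces to two verifications: first, that the three identities in Lemma~\ref{lem41} are purely formal identities in abstract indeterminates; and second, that the distinctness of the $S_i$'s forces the $z_i$'s to be pairwise distinct nonzero elements of $\mathbb{Z}_2[y_1, \ldots, y_k]$, so that no denominator vanishes after substitution. The essential structural input making the surviving witness nonzero is the prescribed shape $S_1 = \{1\}$, $S_j \subset \{2,\ldots,k\}$ for $j \geq 2$, which concentrates all $y_1$-dependence of the formula in the single factor $z_1 = y_1$. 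For the final sentence of the theorem, the inequality $n - 2 \leq 2^{k-1} - 3$ gives $n \leq 2^{k-1} - 1$, so one can choose $n-1$ distinct nonempty subsets of $\{2,\ldots,k\}$ for $S_2, \ldots, S_n$; letting $(m,n)$ range over $1 \leq m \leq n-2$ and $3 \leq n \leq 2^{k-1}-1$, the quantity $m+n-1$ attains every integer in $\{3, \ldots, 2^k - 5\}$, producing an indecomposable class in each such degree.
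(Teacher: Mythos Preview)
Your proof is correct and follows essentially the same approach as the paper: both reduce to Proposition~\ref{ind1} by observing that $\gamma\circ\eta_*$ of the pulled-back class is obtained from the original via the substitution $y_i\mapsto \sum_{l\in S_i}y_l$ (the paper phrases this as the induced map $\psi^*$ on $R_*$ and on $B_*(y_1,\ldots,y_n)$), and both exploit $S_1=\{1\}$, $S_j\subset\{2,\ldots,k\}$ to conclude that the $y_1^N$-witness from Proposition~\ref{ind1} survives uncancelled. Your version is slightly more explicit in justifying why the Lemma~\ref{lem41} identities transfer and in carrying out the degree count for the final sentence, both of which the paper leaves implicit.
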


\begin{proof}
 Note that, since $S_{1} , \cdots , S_{n}$ are distinct subsets, we know, by Proposition \ref{prp32}, that the  induced action $\psi \circ \phi$ has exactly $(m+1)n$ {\it isolated} fixed points, which are identical to the fixed points of $\phi $. Now consider the following commuting diagram :
$$\begin{array}{ccc}
R_{*}((\mathbb{Z}_{2})^{n}) &\stackrel{\psi^{*}}{\longrightarrow}& R_{*}((\mathbb{Z}_{2})^{k})\\
\gamma \downarrow ~& &                       ~\downarrow \gamma \\
B_{*}(y_1 , \cdots , y_n )&\stackrel{\psi^{*}}{\longrightarrow}& B_{*}(y_1 , \cdots , y_n ) \\
\end{array}$$

Then, we have $$\psi^{*} (\eta_{*} [(\mathbb{Z}_{2})^{n} , \mathcal{H}(m,n) , \phi] ) = \eta_{*} [(\mathbb{Z}_{2})^{k} , \mathcal{H}(m,n) , \psi \circ \phi]$$ and  
$$\psi^{*} (y_{1}) = y_{1} , $$  
$$\psi^{*} (y_{j}) = \sum_{l \in S_{j}} y_{l} , \hspace*{0.2cm} j > 1. $$

Therefore, the term containing $y_{1}^{N}$ in the  coefficient of $b_{N+m}$ cannot be cancelled  and hence, the coefficient  of $b_{N+m} $ itself, in the expression for
 $\gamma(\eta_{*} [(\mathbb{Z}_{2})^{k} , \mathcal{H}(m,n) , \psi \circ \phi])$ is non-zero. This implies $ [(\mathbb{Z}_{2})^{k} , \mathcal{H}(m,n) , \psi \circ \phi] $
 is indecomposable in $Z_{m+n-1}((\mathbb{Z}_{2})^{k})$.

\end{proof}

We can extend the above argument to give a lower bound on the number of linearly independent elements.

\begin{thm}\label{linind}
 If $1 \leq d \leq 2^{k-i+1} -5$, then there are at least $i$ linearly independent, indecomposable elements in $Z_{d}((\mathbb{Z}_{2})^{k})$. 
\end{thm}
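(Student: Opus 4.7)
The plan is to produce $i$ indecomposable Milnor-manifold classes in $Z_d((\mathbb{Z}_2)^k)$, distinguished by which coordinate of $(\mathbb{Z}_2)^k$ is sent to the representation $Y_{\{1\}}$ under the pullback homomorphism, and to prove them linearly independent by exploiting the disjoint support of their images under $\gamma\circ\eta_*$ in the variables $y_1,\ldots,y_i$.

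First, for $d$ in the range $3 \leq d \leq 2^{k-i+1}-5$ (the cases $d\in\{1,2\}$ would require separate treatment outside the Milnor-manifold framework), select $(m,n)$ with $m+n-1=d$, $1\leq m\leq n-2$, and $n\leq 2^{k-i}-1$: take $(m,n)=(1,d)$ when $d\leq 2^{k-i}-1$, and $(m,n)=(d+2-2^{k-i},\,2^{k-i}-1)$ otherwise, where the hypothesis $d\leq 2^{k-i+1}-5$ guarantees $m\leq n-2$. Fix once and for all $n-1$ distinct nonempty subsets $T_2,\ldots,T_n$ of $\{i+1,\ldots,k\}$; this is possible because $n-1\leq 2^{k-i}-2$. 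For each $j=1,\ldots,i$, define $\psi_j:(\mathbb{Z}_2)^k\to(\mathbb{Z}_2)^n$ by $S_1^{(j)}=\{j\}$ and $S_l^{(j)}=T_l$ for $l\geq 2$, and set $\alpha_j:=[(\mathbb{Z}_2)^k,\mathcal{H}(m,n),\psi_j\circ\phi]$. Since all $S_l^{(j)}$ lie in the $(k-i+1)$-element set $\{j\}\cup\{i+1,\ldots,k\}$ and $n-2\leq 2^{k-i}-3$, Theorem \ref{ind2} (applied via the coordinate subgroup inclusion) shows each $\alpha_j$ is indecomposable in $Z_d((\mathbb{Z}_2)^k)$.

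For linear independence, the key structural observation is that $\psi_j^*(y_l)\in\mathbb{Z}_2[y_j,y_{i+1},\ldots,y_k]$ for every $l$, so $\gamma(\eta_*(\alpha_j))$ lies in the subring $L_j:=B[[y_j,y_{i+1},\ldots,y_k]]\subset L$ and is free of $y_{j'}$ for $j'\in\{1,\ldots,i\}\setminus\{j\}$. Suppose $\sum_{j=1}^i c_j\alpha_j=0$ with $c_j\in\mathbb{Z}_2$. Injectivity of $\gamma\circ\eta_*$ gives $\sum_j c_j\,\gamma(\eta_*(\alpha_j))=0$ in $L$. Fix $j_0\in\{1,\ldots,i\}$ and apply the ring homomorphism $\rho_{j_0}:L\to L_{j_0}$ setting $y_{j'}=0$ for $j'\in\{1,\ldots,i\}\setminus\{j_0\}$. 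Then $\rho_{j_0}$ fixes $\gamma(\eta_*(\alpha_{j_0}))$, while for $j'\neq j_0$ with $j'\leq i$ the element $\rho_{j_0}(\gamma(\eta_*(\alpha_{j'})))$ lies in $B[[y_{i+1},\ldots,y_k]]$ and contains no $y_{j_0}$. Extracting the $y_{j_0}$-dependent part of the resulting relation forces
\[
c_{j_0}\cdot\bigl(\text{$y_{j_0}$-dependent part of }\gamma(\eta_*(\alpha_{j_0}))\bigr)=0.
\]
By the computation in the proof of Theorem \ref{ind2}, for a sufficiently large power of two $N$, the coefficient of $b_{N+m}$ in $\gamma(\eta_*(\alpha_{j_0}))$ contains the uncancellable term
\[
\frac{y_{j_0}^N}{y_{j_0}\prod_{l=2}^n\bigl(y_{j_0}+\psi_{j_0}^*(y_l)\bigr)},
\]
which under the further specialization $y_{i+1}=\cdots=y_k=0$ reduces to the nonzero polynomial $y_{j_0}^{N-n}$. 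Hence the $y_{j_0}$-dependent part of $\gamma(\eta_*(\alpha_{j_0}))$ is nonzero, so $c_{j_0}=0$; as $j_0$ was arbitrary, $\alpha_1,\ldots,\alpha_i$ are linearly independent.

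The main obstacle is confirming that the ``$y_1^N$-leading'' computation of Theorem \ref{ind2} transports intact when the distinguished coordinate is $j_0$ instead of $1$ and the remaining coordinates are $\{i+1,\ldots,k\}$. This is essentially forced by the naturality of $\gamma$ under pullback by $\psi_{j_0}$, but care is required to verify that the specialization $y_{i+1}=\cdots=y_k=0$ preserves the uncancellability of the identified $y_{j_0}$-term within the full coefficient of $b_{N+m}$, not merely within the highlighted summand.
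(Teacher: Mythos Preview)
Your approach is essentially the same as the paper's: both construct $i$ pullback actions $\psi_j\circ\phi$ with $S_1^{(j)}=\{j\}$ and the remaining subsets drawn from $\{i+1,\ldots,k\}$, invoke Theorem~\ref{ind2} for indecomposability, and separate the classes by the fact that $\gamma\circ\eta_*(\alpha_j)$ involves $y_j$ but no other $y_r$ with $r\le i$.

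One difference worth noting: the paper additionally imposes $m\ge i$, appealing to the symmetry of Proposition~\ref{ind1} among $y_1,\ldots,y_m$ so that the uncancellable $y_j^N$ term is available for every $j\le i$. Your argument avoids this restriction by observing that only the single non-cancellation of $y_1^N$ in the original action is needed, since $\psi_j^*(y_1)=y_j$; this lets you take $m$ as small as $1$ and thereby cover $d\ge 3$, whereas the paper's constraint $i\le m\le n-2$ forces $d\ge 2i+1$. Your self-flagged concern in the last paragraph is not a genuine obstacle: the non-vanishing of the $y_{j_0}$-dependent part follows directly from $\psi_{j_0}^*(C_N)\neq 0$ (the denominator factors $y_{j_0}+\psi_{j_0}^*(y_l)$ are all nonzero since $\psi_{j_0}^*(y_l)$ for $l\ge2$ lies in the span of $y_{i+1},\ldots,y_k$), so the further specialisation $y_{i+1}=\cdots=y_k=0$ is unnecessary. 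Both your argument and the paper's leave the very small degrees outside the Milnor-manifold range uncovered; you acknowledge this explicitly for $d\in\{1,2\}$.
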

\begin{proof}
The proof of Proposition \ref{ind1} yields by symmetry for $1\le j \le m$ that the coefficient of $y_j^N$ in $b_{N+m}(\eta_*\mathcal{H}(m,n))$ does not cancel out.  For a fixed $i$ and $1 \leq j \leq i$, consider the maps $\psi_j \colon (\mathbb{Z}_{2})^{k} \rightarrow (\mathbb{Z}_{2})^{n}$,  where, for each $j$, $\psi_{j}$ is given by $n$ distinct, non-empty subsets $S_{1}^{j} , \cdots S_{n}^{j} \subset \underline{k}$, such that  $S_{1}^{j}$ = $\{ j \}$
and  $S_{2}^{j} , \cdots , S_{n}^{j} \subset \{ i+1, \cdots , k \}$ are $n$ distinct non-empty  subsets. Such a choice is possible provided $n \leq 2^{k-i} -1$. Now consider the actions of $(\mathbb{Z}_{2})^{k}$ on $\mathcal{H}(m,n)$ for $m\ge i$ induced by the maps $\psi_j \circ \phi$.  

Note that for the equivariant cobordism classes  $ \lambda_j= [(\mathbb{Z}_{2})^{k} , \mathcal{H}(m,n) , \psi_j \circ \phi] $, $i \leq m \leq n - 2 $, we have that $b_{N+m}(\gamma\eta_*\lambda_j)$ contains a non-cancelling term $y_j^N$ and no other $y_r$ for $1\le r \le i$. It follows that $\lambda_j$ ($1\le j\le i$) are  $i$ linearly independent, indecomposable elements of $Z_{m+n-1}((\mathbb{Z}_{2})^{k})$.

\end{proof}

\begin{remark}\label{unor}
For equivariant cobordism the image of the homomorphism $\epsilon_* : Z_*(G) \to \mathcal{N}_*$ to the unoriented cobordism algebra has been determined by tom Dieck (\cite{milnor}, also see \cite{eq}) as the subalgebra generated by $\oplus_{i < 2^n} \mathcal{N}_i$. Therefore, it is important to construct (indecomposable) classes in $Z_*(G)$ in $\mathit{Ker}(\epsilon_*)$. From \cite{Das} we know that $\mathcal{H}(m,n)$ bounds if  $m=1$, both $m,n$ are odd and in the case $n=2^l-2$ for $l\ge 2$. Using these values we can arrange the indecomposable elements in $Z_d((\Z_2)^n)$ in Theorem \ref{ind2} to be in $\mathit{Ker}(\epsilon_*)$ except for $d = 2^k-6$ and the indecomposable elements in Theorem \ref{linind} to be in $\mathit{Ker}(\epsilon_*)$  except for $d = 2^{k-i+1} -6$.
\end{remark}
\newpage
\mbox{ }\\

\providecommand{\bysame}{\leavevmode\hbox to3em{\hrulefill}\thinspace}
\providecommand{\MR}{\relax\ifhmode\unskip\space\fi MR }
\providecommand{\MRhref}[2]{%
  \href{http://www.ams.org/mathscinet-getitem?mr=#1}{#2}
}
\providecommand{\href}[2]{#2}

\mbox{ } \\

\end{document}